\renewcommand{\baselinestretch}{1.2}
\newcommand{\dated}{\mbox{} \hfill {\small [{\tt \today}]}} \usepackage{amsmath,amssymb,amsfonts,diagrams}
\newcommand{\cstar}{{C^\ast}}
\newcommand{\dist}{{\operatorname{dist}}}
\newcommand{\lspan}{{\operatorname{span}}}
\newtheorem{lemma}{Lemma}
\newtheorem*{theorem}{Theorem}
\newtheorem*{corollary}{Corollary}
\theoremstyle{remark}
\newtheorem*{remarks}{Remarks}
\newcommand{\Ball}{\mathrm{Ball}}
\newcommand{\iset}{\mathbb{I}}
\title{A new and simple proof of Schauder's theorem}
\author{\textit{Volker Runde}}
\date{}
\begin{document}
\maketitle
Schauder's theorem from 1930 (\cite{Sch}) asserts that a bounded linear operator between Banach spaces is compact if and only if its adjoint is. Schauder's original proof is completely elementary; at its heart is a diagonal argument that is reminiscent of proofs of the Arzel\`a--Ascoli theorem. Indeed, in \cite{Kak}, S.\ Kakutani gave a proof of Schauder's theorem that invokes the Arzel\`a--Ascoli theorem explicitly. This proof has by now become the canonical one, i.e., it is featured in most textbooks on functional analysis (see \cite{Rud}, \cite{Meg}, or \cite{Fab}, for instance). An alternative proof of Schauder's theorem uses the Alao\u{g}lu--Bourbaki theorem (\cite{Con}).
\par 
In this note, we shall provide another proof of Schauder's theorem, which is both short and completely elementary in the sense that it does not depend on anything beyond basic functional analysis, i.e., the Hahn--Banach theorem and some of its consequences; in particular, we avoid the Arzel\`a--Ascoli theorem (and any kind of related diagonal argument).
\par 
Throughout, we write $\Ball(E)$ for the closed unit ball of a Banach space $E$.
\par 
The following observation was made by H.\ Saar (\cite{Saa}).
\par 
Let $E$ and $F$ be Banach spaces, and let $T \!: E \to F$ be compact. Then $T(\Ball(E))$ is totally bounded, so that, for each $\epsilon > 0$, there are $x_1, \ldots, x_n \in \Ball(E)$ such that, for each $x \in \Ball(E)$, there is $j \in \{ 1, \ldots, n \}$ such that $\| Tx - Tx_j \| < \epsilon$. Letting $Y_\epsilon := \lspan \{ T x_1, \ldots, T x_n \}$. It follows that $\| Q_{Y_\epsilon} T \| < \epsilon$, where $Q_{Y_\epsilon} \!: F \to F / Y_{\epsilon}$ is the quotient map.
\par 
Conversely, suppose that $T \!: E \to F$ is bounded and that, for each $\epsilon > 0$, there is a finite-dimensional subspace $Y_\epsilon$ of $F$ such that $\| Q_{Y_\epsilon} T \| < \epsilon$. Let $\epsilon > 0$ be arbitrary. Then there is a finite-dimensional subspace $Y_\frac{\epsilon}{3}$ such that $\| Q_{Y_\frac{\epsilon}{3}} T \| < \frac{\epsilon}{3}$. Set
\[
  K := \left\{ y \in Y_\frac{\epsilon}{3} : \dist(y, T(\Ball(E))) < \frac{\epsilon}{3} \right\}.
\]
Then $K$ is bounded and thus, as $\dim Y_\frac{\epsilon}{3} < \infty$, totally bounded, i.e., there are $y_1, \ldots, y_m \in K$ such that, for each $y \in K$, there is $j \in \{ 1, \ldots, m \}$ such that $\| y - y_j \| < \frac{\epsilon}{3}$. By the definition of $K$, there is, for each $j =1, \ldots, m$, an element $x_j \in \Ball(E)$ with $\| y_j - Tx_j \| < \frac{\epsilon}{3}$. Let $x \in \Ball(E)$ be arbitrary. Since $\| Q_{Y_\frac{\epsilon}{3}} T \| < \frac{\epsilon}{3}$, there is $y \in K$ such that $\| Tx - y \| < \frac{\epsilon}{3}$. Let $j \in \{1, \ldots, m \}$ such that $\| y - y_j \| < \frac{\epsilon}{3}$. It follows that
\[
  \| Tx - Tx_j \| \leq \| T x - y \| + \| y - y_j \| + \| y_j - Tx_j \| < \frac{\epsilon}{3} + \frac{\epsilon}{3} +\frac{\epsilon}{3}
  = \epsilon.
\]
Hence, $T(\Ball(E))$ is totally bounded, and $T$ is compact.
\par
We thus have:
\begin{lemma} \label{l1}
Let $E$ and $F$ be Banach spaces, and let $T \!: E \to F$ be bounded. Then $T$ is compact if and only if, for each $\epsilon > 0$, there is a finite-dimensional subspace $Y_\epsilon$ of $F$ such that $\| Q_{Y_\epsilon} T \| < \epsilon$, where $Q_{Y_\epsilon} \!: F \to F / Y_\epsilon$ is the quotient map.
\end{lemma}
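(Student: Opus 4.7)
The strategy is to exploit two standard facts from basic functional analysis: first, that a bounded operator $T$ is compact precisely when $T(\Ball(E))$ is totally bounded; and second, that bounded subsets of a finite-dimensional normed space are totally bounded (a consequence of local compactness in finite dimensions). The two directions of the equivalence align naturally with these two facts, and the preceding discussion already outlines the argument, so the plan is simply to formalize it as a clean two-part proof.

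For the ``only if'' direction, I assume $T$ is compact and fix $\epsilon > 0$. Total boundedness of $T(\Ball(E))$ produces points $x_1, \ldots, x_n \in \Ball(E)$ such that the balls of radius $\epsilon$ around $Tx_1, \ldots, Tx_n$ cover $T(\Ball(E))$. Setting $Y_\epsilon := \lspan\{Tx_1, \ldots, Tx_n\}$, which is finite-dimensional, for each $x \in \Ball(E)$ I pick $j$ with $\|Tx - Tx_j\| < \epsilon$ and observe, since $Tx_j \in Y_\epsilon$, that $\|Q_{Y_\epsilon} Tx\| \le \|Tx - Tx_j\| < \epsilon$. Passing to the supremum over $x$ gives $\|Q_{Y_\epsilon} T\| \le \epsilon$; to obtain strict inequality I simply repeat the covering with radius $\epsilon/2$ at the outset.

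For the ``if'' direction, I fix $\epsilon > 0$ and invoke the hypothesis at $\epsilon/3$ to get a finite-dimensional $Y_{\epsilon/3} \subseteq F$ with $\|Q_{Y_{\epsilon/3}} T\| < \epsilon/3$. The crucial device is the auxiliary set
\[
  K := \left\{ y \in Y_{\epsilon/3} : \dist(y, T(\Ball(E))) < \tfrac{\epsilon}{3} \right\},
\]
which is bounded (each $y \in K$ satisfies $\|y\| < \|T\| + \epsilon/3$) and therefore totally bounded, because it lives in the finite-dimensional space $Y_{\epsilon/3}$. Covering $K$ by $\epsilon/3$-balls around $y_1, \ldots, y_m \in K$, and for each $j$ choosing $x_j \in \Ball(E)$ with $\|y_j - Tx_j\| < \epsilon/3$, I then show $Tx_1, \ldots, Tx_m$ form an $\epsilon$-net for $T(\Ball(E))$: given $x \in \Ball(E)$, the quotient-norm bound provides $y \in Y_{\epsilon/3}$ (a near-optimizer of the infimum defining $\|Q_{Y_{\epsilon/3}} Tx\|$) with $\|Tx - y\| < \epsilon/3$, so $y \in K$; then $\|y - y_j\| < \epsilon/3$ for some $j$, and a triangle inequality along $Tx \to y \to y_j \to Tx_j$ yields $\|Tx - Tx_j\| < \epsilon$.

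The main obstacle is the ``if'' direction: the hypothesis $\|Q_{Y_{\epsilon/3}} T\| < \epsilon/3$ only asserts that $T(\Ball(E))$ lies within $\epsilon/3$ of $Y_{\epsilon/3}$, but $Y_{\epsilon/3}$ itself is unbounded, so one cannot directly exploit finite-dimensionality. The clever step is to intersect with the tube $K$, which is automatically bounded by $\|T\| + \epsilon/3$ and therefore totally bounded in $Y_{\epsilon/3}$; this is the one place where real thought is required, and once it is done the rest is a routine triangle-inequality chase.
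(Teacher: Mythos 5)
Your proof is correct and follows essentially the same route as the paper's: the forward direction via a finite $\epsilon$-net spanning $Y_\epsilon$, and the converse via the bounded ``tube'' $K \subseteq Y_{\epsilon/3}$ and a three-term triangle inequality. Your extra care in passing from $\le \epsilon$ to $< \epsilon$ by starting with radius $\epsilon/2$ is a minor refinement the paper glosses over, but does not change the argument.
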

\par 
Our next lemma requires the Hahn--Banach theorem.
\begin{lemma} \label{l2}
Let $E$ and $F$ be Banach spaces, let $T \!: E \to F$ be bounded, let $\epsilon > 0$, and let $X$ be a closed subspace of $E$ with finite codimension such that $\| T |_X \| < \frac{\epsilon}{3}$. Then there is a finite-dimensional subspace $X_0$ of $E$ such that $\| Q_{T X_0} T \| < \epsilon$, where $Q_{TX_0} \!: F \to F / TX_0$ is the quotient map.
\end{lemma}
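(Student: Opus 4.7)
The idea is to exploit the finite-dimensionality of $E/X$: its closed unit ball is norm-compact, so for any $\delta>0$ I can pick a finite $\delta$-net $[u_1],\dots,[u_m]$ in $\Ball(E/X)$, and for each $j$ use the definition of the quotient norm to choose a lift $u_j\in E$ with $\|u_j\|<1+\delta$. Setting
\[
X_0 := \lspan\{u_1,\dots,u_m\},
\]
produces a finite-dimensional subspace of $E$, with the parameter $\delta$ to be fixed at the end.

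To estimate $\|Q_{TX_0}T\|$, let $x\in\Ball(E)$ be arbitrary and pick $j$ with $\|[x]-[u_j]\|_{E/X}<\delta$. A second use of the quotient-norm formula gives $y\in X$ with $\|x-u_j-y\|<2\delta$; writing $r:=x-u_j-y$, we obtain a decomposition $x=u_j+y+r$ with $u_j\in X_0$, $y\in X$, $\|r\|<2\delta$, and $\|y\|\leq \|x\|+\|u_j\|+\|r\|<2+3\delta$ by the triangle inequality. Since $Tu_j\in TX_0$,
\[
\|Q_{TX_0}Tx\|\leq \|Ty\|+\|Tr\|\leq \|T|_X\|\cdot\|y\|+\|T\|\cdot\|r\|<\tfrac{\epsilon}{3}(2+3\delta)+2\delta\,\|T\|,
\]
which is $<\epsilon$ as soon as $\delta<\epsilon/(3(\epsilon+2\|T\|))$. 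Fixing such a $\delta$ completes the construction; Hahn--Banach enters only indirectly, through the standard identification of $E/X$ as a Banach space whose norm is realized as an infimum over representatives.

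The main obstacle is to resist the temptation of a direct projection argument: one would choose a bounded projection $P:E\to X$ along some finite-dimensional complement $Z$ and set $X_0:=Z$, giving $\|Q_{TZ}Tx\|\leq\|T|_X\|\cdot\|Px\|$ and hence demanding $\|P\|<3$. However, even an Auerbach-type basis of $E/X$ only produces $\|P\|\leq 1+\dim(E/X)(1+\delta)$, which already fails when $\dim(E/X)\geq 2$, and no uniform bound $<3$ on the projection norm is available for an arbitrary closed finite-codimensional subspace. The net construction circumvents this by approximating each $x$ using a single lift $u_j$ rather than a linear combination of basis vectors, so the $X$-component of the decomposition always has norm $<2+3\delta$ regardless of $\dim(E/X)$, at the cost of letting $\dim X_0$ grow like $\delta^{-\dim(E/X)}$.
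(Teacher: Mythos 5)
Your proof is correct, and it takes a genuinely different route from the paper's. The paper proves Lemma \ref{l2} by embedding $F$ isometrically into $\ell^\infty(\iset)$ and applying the Hahn--Banach theorem coordinatewise to extend $T|_X$ to an operator $\tilde{T} \!: E \to \ell^\infty(\iset)$ with $\| \tilde{T} \| = \| T|_X \| < \frac{\epsilon}{3}$; then $S := T - \tilde{T}$ vanishes on $X$, hence has finite rank, and $X_0$ is taken to be the span of points $x_1, \ldots, x_n$ whose $S$-images form an $\frac{\epsilon}{3}$-net for $S(\Ball(E))$. You instead exploit the compactness of $\Ball(E/X)$ directly: you span $X_0$ by lifts of a $\delta$-net in the quotient and decompose each $x \in \Ball(E)$ as $u_j + y + r$ with $u_j \in X_0$, $y \in X$ of norm $< 2 + 3\delta$, and $\| r \| < 2\delta$, which yields the same conclusion after choosing $\delta$ small. (Your estimates check out; in fact the quotient-norm formula already gives $\| x - u_j - y \| < \delta$, so your $2\delta$ is a harmless over-estimate.) What each approach buys: your argument dispenses with the Hahn--Banach theorem entirely for this lemma---the quotient norm on the finite-dimensional space $E/X$ needs no duality---so it is, if anything, more elementary than the paper's, at the cost of some bookkeeping and an $X_0$ whose dimension grows like $\delta^{-\dim(E/X)}$; the paper's extension argument is shorter once the injectivity of $\ell^\infty(\iset)$ is in hand and isolates the finite-rank operator $S$ cleanly. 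Your closing remark correctly identifies why the naive projection argument fails (no uniform bound $< 3$ on projections onto finite-codimensional subspaces), which is precisely the obstruction both proofs are designed to circumvent.
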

\begin{proof}
Using the Hahn--Banach theorem, we can embed $F$ isometrically into $\ell^\infty(\iset)$ for a suitable index set ($\iset = \Ball(E^\ast)$ will do). Hence, we can suppose without loss of generality that $F = \ell^\infty(\iset)$.
\par 
Applying the Hahn--Banach theorem coordinatewise, we obtain an operator $\tilde{T} \!: E \to \ell^\infty(\iset)$ such that $\tilde{T} |_X = T |_X$ and $\| \tilde{T} \| = \| T |_X \| < \frac{\epsilon}{3}$. Set $S := T-\tilde{T}$. Then $S$ vanishes on $X$, and since $X$ has finite codimension this means that $S$ is a finite rank operator and thus compact. Consequently, there are $x_1, \ldots, x_n \in \Ball(E)$ such that, for each $x \in \Ball(E)$, there is $j \in \{ 1, \ldots, n \}$ with $\| Sx - Sx_j \| < \frac{\epsilon}{3}$.
\par 
Fix $x \in \Ball(E)$, let $j \in \{ 1, \ldots, n \}$ be such that $\| Sx - Sx_j \| < \frac{\epsilon}{3}$, and note that
\[
  \| Tx - Tx_j \| \leq \| Sx - Sx_j \| + \| \tilde{T} x - \tilde{T} x_j \| 
  < \frac{\epsilon}{3} + 2 \| \tilde{T} \| <
  \frac{\epsilon}{3} + 2 \frac{\epsilon}{3} = \epsilon.
\]
The space $X_0 = \lspan \{ x_1, \ldots, x_n \}$ thus has the desired property.
\end{proof}
\par 
In conjunction, Lemmas \ref{l1} and \ref{l2} yield immediately:
\begin{corollary}
Let $E$ and $F$ be Banach spaces, and let $T \!: E \to F$ be bounded with the following property: for each $\epsilon > 0$, there is a closed subspace $X_\epsilon$ of $E$ with finite codimension such that $\| T |_{X_\epsilon} \| < \epsilon$. Then $T$ is compact.
\end{corollary}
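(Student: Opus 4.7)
The plan is to simply chain Lemma~\ref{l1} and Lemma~\ref{l2} together, since the corollary is essentially engineered to be their immediate consequence. The hypothesis of the corollary furnishes, for each prescribed precision, a closed finite-codimension subspace on which $T$ is small in norm; this is precisely the input that Lemma~\ref{l2} requires. The output of Lemma~\ref{l2} is a finite-dimensional subspace of $F$ (namely $TX_0$) on which $T$ compresses well, which is precisely the criterion that Lemma~\ref{l1} certifies as equivalent to compactness.

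More concretely, I would fix an arbitrary $\epsilon > 0$ and, using the hypothesis of the corollary applied to the threshold $\epsilon/3$, select a closed finite-codimension subspace $X_{\epsilon/3}$ of $E$ with $\|T|_{X_{\epsilon/3}}\| < \epsilon/3$. I would then feed this $X_{\epsilon/3}$ into Lemma~\ref{l2}, which produces a finite-dimensional subspace $X_0 \subseteq E$ such that $\|Q_{TX_0} T\| < \epsilon$. Since $\dim TX_0 \le \dim X_0 < \infty$, the subspace $Y_\epsilon := TX_0$ of $F$ is finite-dimensional and satisfies $\|Q_{Y_\epsilon} T\| < \epsilon$. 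As $\epsilon > 0$ was arbitrary, the criterion of Lemma~\ref{l1} is met, so $T$ is compact.

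There is essentially no obstacle here: the work has already been absorbed into the two preceding lemmas, and the only thing to be careful about is the cosmetic bookkeeping of the factor $\tfrac{1}{3}$ (applying the hypothesis at $\epsilon/3$ rather than at $\epsilon$) so that Lemma~\ref{l2} can be invoked verbatim. The entire proof should therefore read as a two-sentence deduction of the form "given $\epsilon$, pick $X_{\epsilon/3}$ by hypothesis; apply Lemma~\ref{l2} to obtain $X_0$; conclude by Lemma~\ref{l1}."
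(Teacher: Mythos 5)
Your proposal is correct and is exactly the argument the paper intends: the paper states that the Corollary follows "immediately" from Lemmas~\ref{l1} and \ref{l2} in conjunction, and your chaining (hypothesis at $\epsilon/3$ $\to$ Lemma~\ref{l2} $\to$ Lemma~\ref{l1}) is precisely that deduction, with the bookkeeping of the factor $\frac{1}{3}$ handled correctly.
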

\par 
We can now prove Schauder's theorem:
\begin{theorem}
Let $E$ and $F$ be Banach spaces, and let $T \!: E \to F$ be a bounded linear operator. Then the following are equivalent:
\begin{enumerate}[\rm (i)]
\item $T$ is compact;
\item for each $\epsilon > 0$, there is a finite-dimensional subspace $Y_\epsilon$ of $F$ such that $\| Q_{Y_\epsilon} T \| < \epsilon$, where $Q_{Y_\epsilon} \!: F \to F / Y_\epsilon$ is the quotient map;
\item for each $\epsilon > 0$, there is a closed subspace $X_\epsilon$ of $E$ with finite codimension such that $\| T |_{X_\epsilon} \| < \epsilon$;
\item $T^\ast \!: F^\ast \to E^\ast$ is compact.
\end{enumerate}
\end{theorem}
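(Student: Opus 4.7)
The equivalence (i)$\iff$(ii) is already Lemma~\ref{l1}, and the implication (iii)$\implies$(i) is the Corollary. Hence it suffices to close the loop by proving (ii)$\implies$(iv) and (iv)$\implies$(iii). Both steps are pure annihilator/duality arguments that require nothing beyond the already-established machinery plus the standard isometric identifications $(F/Y)^\ast \cong Y^\perp$ and $(E/X)^\ast\cong X^\perp$ that come out of the Hahn--Banach theorem.

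For (ii)$\implies$(iv), fix $\epsilon>0$ and take a finite-dimensional $Y_\epsilon\subseteq F$ with $\|Q_{Y_\epsilon}T\|<\epsilon$. I would set $Z:=Y_\epsilon^\perp\subseteq F^\ast$, a \emph{closed} subspace of finite codimension (equal to $\dim Y_\epsilon$). For any $f\in Z$ the functional factors as $f=\tilde f\circ Q_{Y_\epsilon}$ with $\|\tilde f\|=\|f\|$, so $T^\ast f=\tilde f\circ Q_{Y_\epsilon}\circ T$ and therefore $\|T^\ast f\|\le\|Q_{Y_\epsilon}T\|\,\|f\|<\epsilon\|f\|$. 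Thus $\|T^\ast|_Z\|<\epsilon$, and since this holds for every $\epsilon>0$, the operator $T^\ast$ meets the hypothesis of the Corollary (applied to $T^\ast$ in place of $T$), whence $T^\ast$ is compact, i.e., (iv) holds.

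For (iv)$\implies$(iii), apply Lemma~\ref{l1} to the compact operator $T^\ast$ to obtain, for a given $\epsilon>0$, a finite-dimensional subspace $W\subseteq E^\ast$ with $\|Q_W T^\ast\|<\epsilon$. I would set $X:=\,^\perp W=\{x\in E:f(x)=0\ \forall f\in W\}$, which is closed and of codimension $\dim W$. For $x\in X$ with $\|x\|\le 1$ and any $g\in\Ball(F^\ast)$, every $h\in W$ satisfies $h(x)=0$, so
\[
 |g(Tx)|=|T^\ast g(x)|=|(T^\ast g-h)(x)|\le\|T^\ast g-h\|;
\]
taking the infimum over $h\in W$ gives $|g(Tx)|\le\|Q_W T^\ast g\|\le\|Q_W T^\ast\|<\epsilon$. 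Sup-ing over $g\in\Ball(F^\ast)$ via Hahn--Banach yields $\|Tx\|<\epsilon$, hence $\|T|_X\|\le\epsilon$, which is (iii).

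The main conceptual point, and the only place the argument is not completely mechanical, is the symmetry between (ii) and (iii) under duality: a finite-dimensional subspace of $F$ on which $Q\circ T$ is small corresponds, via annihilators, to a finite-codimensional subspace of $F^\ast$ on which $T^\ast$ is small, and vice versa. Once this is noticed, Lemma~\ref{l1} (and its application to $T^\ast$) together with the Corollary carry everything across, and no separate hard step is required.
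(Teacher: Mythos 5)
Your proof is correct and follows essentially the same route as the paper: Lemma~\ref{l1} for (i)$\iff$(ii), the Corollary for (iii)$\implies$(i), and the annihilator $Y_\epsilon^\perp$ together with the Corollary applied to $T^\ast$ for (ii)$\implies$(iv). The only (cosmetic) difference is in (iv)$\implies$(iii), where you work directly with the pre-annihilator ${}^\perp W \subseteq E$ instead of passing through $W^\perp \subseteq E^{\ast\ast}$ and intersecting with $E$ as the paper does; note that your final bound there is in fact strict, since $\| T|_X \| \leq \| Q_W T^\ast \| < \epsilon$, so the ``$\leq \epsilon$'' you wrote can be sharpened to the ``$< \epsilon$'' required by (iii).
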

\begin{proof}
(i) $\Longleftrightarrow$ (ii) is Lemma \ref{l1}, and (iii) $\Longrightarrow$ (ii) follows from Lemma \ref{l2}. 
\par 
(ii) $\Longrightarrow$ (iv): Let $\epsilon > 0$, and let $Y_\epsilon$ be a finite-dimensional subspace of $F$ such that $\| Q_{Y_\epsilon} T \| < \epsilon$. Let $X_\epsilon$ be the annihilator of $Y_\epsilon$ in $F^\ast$, so that $X_\epsilon$ has finite codimension in $F^\ast$, $T^\ast |_{X_\epsilon} = (Q_{Y_\epsilon}T)^\ast$, and thus $\| T^\ast |_{X_\epsilon} \| < \epsilon$. Since $\epsilon > 0$ was arbitrary, the Corollary---applied to $T^\ast$---thus yields (iv).
\par 
(iv) $\Longrightarrow$ (iii): Let $\epsilon > 0$. Invoking Lemma \ref{l1} for $T^\ast$ and then arguing as in the proof of (ii) $\Longrightarrow$ (iv), we obtain a closed subspace $X_\epsilon$ of $E^{\ast\ast}$ with finite codimension such that $\| T^{\ast\ast} |_{X_\epsilon} \| < \epsilon$. Consequently, $\| T |_{X_\epsilon \cap E} \| < \epsilon$ holds as well. Since $\epsilon > 0$ was arbitrary, the Corollary implies (i).
\end{proof}
\begin{remarks} \begin{enumerate}
\item As M.\ Cwikel pointed out to me, the equivalence of (i) and (iii) in the Theorem was already obtained by H.\ E.\ Lacey in \cite{Lac}. His result is reproduced with a proof on \cite[p.\ 91]{Pie}.
\item A similar proof of Schauder's Theorem---in the sense that it is both elementary and avoids diagonal arguments---is given in \cite{Gol} and \cite{Kre}: this was pointed out to me by D.\ Werner and A.\ Valette, respectively.
\end{enumerate} \end{remarks}
\renewcommand{\baselinestretch}{1.0}
\dated
\vfill
\begin{tabbing}
\textit{Author's address}: \= Department of Mathematical and Statistical Sciences \\
\> University of Alberta \\
\> Edmonton, Alberta \\
\> Canada T6G 2G1 \\[\medskipamount]
\textit{E-mail}: \> \texttt{vrunde@ualberta.ca} \\[\medskipamount]
\textit{URL}: \> \texttt{http://www.math.ualberta.ca/$^\sim$runde/}
\end{tabbing}

\end{document}